\def\TT{{\mathbb{T}}}
\def\NN{{\mathbb{N}}}
\def\ZZ{{\mathbb{Z}}} 
\def\SS{{\mathbb{S}}}
\def\CC{{\mathbb{C}}}
\def\RR{{\mathbb{R}}} 
\def\TT{{\mathbb{T}}}
\newtheorem{thm}{Theorem}[section]
\newtheorem{cor}[thm]{Corollary}
\newtheorem{lemma}[thm]{Lemma}
\newtheorem{prop}[thm]{Proposition}
\newtheorem{rmk}[thm]{Remark}
\newtheorem{Def}[thm]{Definition}
\title[Calabi flow on complex tori]{On the extension and smoothing of the Calabi flow on complex tori} 
\author{Hongnian Huang}
\address{
Hongnian Huang\\
Department of Mathematics and Statistics\\
University of New Mexico\\
Albuquerque, NM, 87131\\
U.S.A}
\email{hnhuang@unm.edu}
\date{}
\begin{document}
	
\maketitle

\begin{abstract}
In this paper, we continue to study the Calabi flow on complex tori. We develop a new method to obtain an explicit bound of the curvature of the Calabi flow. As an application, we show that when $n=2$, the Calabi flow starting from a weak K\"ahler metric will become smooth immediately. It implies that in our settings, the weak minimizer of the Mabuchi energy is a smooth one. 
\end{abstract}

\section{Introduction}
In geometrical flows, one of the important tools is the blowup analysis. For example, in Perelman's celebrated work, he uses the blowup analysis to understand the singularities under the Ricci flow in a 3-manifold \cite{P1, P2, P3}. To do this, he needs three tools:

\begin{enumerate}
\item Shi's estimates \cite{Shi} in Ricci flow.
\item Non-collapsing property of the Ricci flow. \cite{P1}
\item Classifications of the $\kappa$-solutions. \cite{P2}
\end{enumerate}

The blowup analysis plays an important role not only in geometrical flows, but also in the continuity methods. The most relevant work to our paper is Donaldson's celebrated proof of the Yau-Tian-Donaldson conjecture in toric surfaces \cite{D1, D2, D3, D4}. In Donaldson's work, he also establishes the Shi-type estimates, the non-collapsing property and the classification of the limiting spaces.

In the Calabi flow, the first work using the blowup analysis is due to Chen-He \cite{ChenHe2, ChenHe3} who study the Calabi flow on Fano toric surfaces. They showed that the Sobolev constant is uniformly bounded along the Calabi flow if the Calabi energy of the initial K\"ahler metric is less than an explicit constant. Thus to show the long time existence of the Calabi flow, they proved

\begin{itemize}
\item Shi-type estimates in integral forms.

\item Classification/non-existence of the limiting spaces.
\end{itemize}

Inspired by the above work, the author proposed a project to study the Calabi flow in toric manifolds \cite{H1}. What we want to establish are the following: 
\begin{enumerate}
\item Shi-type estimates. 
\item Uniform control of Donaldson's $M$-condition: non-collapsing property.
\item Classifications of the limiting spaces.
\end{enumerate}
In \cite{H1}, we classify the limiting spaces. Streets establishes the Shi-type estimates in \cite{St}. Finally,  joint with Feng \cite{FH}, we prove that the Calabi flow exists for all time on $\CC^2 / ( \ZZ^2 + i \ZZ^2)$ with a 2-torus invariant initial metric. 

All the above work require the classification of the limiting space. However, our following theorem suggests that the classification of the limiting spaces may not be necessary to prove the long time existence of the Calabi flow. Let us briefly explain the ideas here. Let $\varphi(t), t \in [0, T)$ be a sequence of relative K\"ahler potentials satisfying the Calabi flow equation. To show that $\varphi(t)$ can be extended over $T$, by Chen-He's compactness theorem (Theorem 1.4 in \cite{ChenHe}), one only needs to show that the Ricci curvature is uniformly controlled in $[0, T)$. The idea of the blowup analysis is that suppose the Ricci curvature is not uniformly bounded in $[0, T)$, then one can pick a sequence of time $t_i \to T^-$, such that
$$
\lim_{i \to \infty} \max_{x \in X} |Rm|(t_i, x) = \infty.
$$
Let $\lambda_i = \max_{x \in X} |Rm|(t_i, x)$. One then proceed to rescale the Calabi flow at $t_i$ by the factor $\lambda_i$. So one obtain a sequence of the Calabi flow $g^{(i)}(t)$. One need to show that as $i \to \infty$, $g^{(i)}(t)$ converges to a limiting Calabi flow $g^{(\infty)}(t)$ in a noncompact K\"ahler manifold $X_\infty$.  This step requires Shi-type estimates and the non-collapsing property. The last step is to obtain a contradiction by studying the behavior of $g^{(\infty)}(t)$ in $X_\infty$. This step usually requires the classification of the limiting spaces. Our observation is that in our settings, there exists an explicit constant $C$ such that if the rescaling factor $\lambda_i \ge C$, then we can obtain a contradiction. Thus we completely avoid the study of the limiting Calabi flow $g^{(\infty)}(t)$ and the limiting K\"ahler manifold $X_\infty$. Our main theorem reads as follows:

\begin{thm}
\label{main}
Let $X = \CC^n / (\ZZ^n + i \ZZ^n).$ Let $\varphi(t) \in \mathcal{H}_\TT, ~ t \in [0, T)$ be a one parameter family of relative K\"ahler potentials satisfying the Calabi flow equation. Suppose that there exists $C_E > 0$ such that for any $t  \in [0, T)$, the total energy 
$$
\left( \int_X |Rm|^n ~ \omega^n(t)  \right)^{\frac{1}{n}}~ \le C_E,
$$ 
then there exists an explicit constant $\lambda > 0$ depending only on $d(\varphi(0), 0), C_E, n$ such that for any $t \in [0, T)$,
$$
|Rm(t)| < \max \left(\lambda, ~ \frac{\lambda}{t^2} \right).
$$
\end{thm}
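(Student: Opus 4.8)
The plan is to argue by contradiction through a Hamilton--Perelman point-selection scheme, but to arrange matters so that the contradiction is reached at a \emph{finite} rescaling factor governed by an explicit constant, thereby bypassing any analysis of a limiting flow. Before touching the blowup, I would first extract two a priori controls that are responsible for the shape $\max(\lambda,\lambda/t^2)$ of the bound. Write $D_0:=d(\varphi(0),0)$ and $\mathrm{Ca}(t):=\int_X (S-\bar{S})^2\,\omega^n(t)$. Since the Calabi flow $\dot\varphi=S-\bar{S}$ is the downward gradient flow of the Mabuchi $K$-energy $\mathcal M$ in the Mabuchi metric, and since on $X=\CC^n/(\ZZ^n+i\ZZ^n)$ the flat metric is the minimizer of the geodesically convex functional $\mathcal M$, the standard gradient-flow estimates in the (nonpositively curved) completion of $\mathcal{H}_\TT$ give first that $d(\varphi(t),0)\le D_0$ for all $t$, and second the energy decay $\mathcal M(\varphi(s))-\mathcal M_{\min}\le D_0^2/(2s)$. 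Because $\mathrm{Ca}(t)=-\tfrac{d}{dt}\mathcal M(\varphi(t))$ is nonincreasing along the flow (again by convexity of $\mathcal M$), comparing $\int_{t/2}^{t}\mathrm{Ca}\,ds\ge (t/2)\,\mathrm{Ca}(t)$ with the energy decay at time $t/2$ yields the explicit bound $\mathrm{Ca}(t)\le 2D_0^2/t^2$. In my view this single inequality is the engine of the theorem: it is explicit in $D_0$, and its $t^{-2}$ profile is exactly the profile that must reappear in the curvature bound.

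Next I would set up the blowup. Suppose the conclusion fails, so that $Q:=|Rm|(\bar t,\bar x)\ge\max(\lambda,\lambda/\bar t^{\,2})$ for some $(\bar t,\bar x)$ and some large $\lambda$ to be fixed at the end. The two cases of the maximum combine into the uniform statement $Q\ge\lambda$ together with $\bar t\ge\sqrt{\lambda/Q}$, which for $\lambda$ large guarantees a backward parabolic interval of rescaled length $\gtrsim \sqrt{\lambda}\,Q^{3/2}$, i.e.\ ample room for a blowup. I would then run the point-selection to replace $(\bar t,\bar x)$ by a point at which $|Rm|\le 2Q$ on a large backward parabolic cylinder, and rescale the flow by the factor $Q$ (metric by $Q$, time by $Q^{-2}$, as dictated by the fourth-order scaling of the Calabi flow). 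On the rescaled flow the curvature equals $1$ at the base point and is bounded on a cylinder whose size grows with $\lambda$; Streets's Shi-type estimates \cite{St} then control all derivatives of the curvature there, while the uniform bound on Donaldson's $M$-condition (the non-collapsing furnished by \cite{FH, H1}) controls the Sobolev constant and bounds volumes of balls from below. Crucially, the scale-invariant quantity $\int_X|Rm|^n\,\omega^n\le C_E^n$ is preserved under the rescaling, and when $n=2$ the Calabi energy $\mathrm{Ca}$ is itself scale invariant.

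The heart of the argument, and the step I expect to be the main obstacle, is to convert this bounded, non-collapsed local geometry into a contradiction \emph{at finite scale}. Rather than extracting a limit $X_\infty$ and invoking a classification, I would exploit the torus symmetry, under which the whole metric is encoded by a single convex symplectic potential and the curvature is governed by Abreu's equation, so that the rescaled configuration is rigidly constrained by $C_E$ and by the non-collapsing. The role of $\lambda$ is then to make a unit-curvature bubble incompatible with these constraints: transporting the a priori bound $\mathrm{Ca}(\bar t)\le 2D_0^2/\bar t^{\,2}$ and the total bound $C_E$ onto the rescaled cylinder, and using $\bar t\ge\sqrt{\lambda/Q}$, one should force the scale-invariant energy available to sustain the bubble across the $\epsilon$-regularity threshold of Chen--He type \cite{ChenHe2, ChenHe3}, contradicting $|Rm|=1$ at the base point. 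This is the precise sense in which ``$\lambda_i\ge C$ forces a contradiction,'' and it is exactly what lets me avoid $g^{(\infty)}$ and $X_\infty$.

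Finally, the two regimes of the bound fall out of the calibration: when $\bar t\ge 1$ the violation reads $Q\ge\lambda$ and produces the constant bound $\lambda$, while when $\bar t<1$ the violation $Q\ge\lambda/\bar t^{\,2}$ rearranges to $\bar t\ge\sqrt{\lambda/Q}$, and the threshold yields the $t^{-2}$ regime. The delicate part, on which I expect the real work to concentrate, is to make the $\epsilon$-regularity threshold explicit in $(D_0,C_E,n)$ \emph{and} to calibrate it so that the contradiction reproduces the sharp $t^{-2}$ profile rather than the cruder $t^{-1}$ or $t^{-1/2}$ rates one would get from a naive use of the same ingredients; this is precisely where the scale invariance of $\mathrm{Ca}$ in dimension $n=2$ and the rigidity of the symplectic-potential description are expected to be indispensable.
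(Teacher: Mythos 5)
Your skeleton (blowup by contradiction, contradiction forced at an explicit finite rescaling factor, no limiting space) matches the paper's, and your a priori estimates ($d(\varphi(t),0)\le D_0$, monotonicity of $\mathrm{Ca}$, and $\mathrm{Ca}(t)\le 2D_0^2/t^2$) are correct statements; but the mechanism you propose for the contradiction --- pushing the rescaled configuration below a Chen--He type $\epsilon$-regularity threshold --- cannot work under the stated hypotheses, and this is not the ``calibration'' issue you defer to the end. An $\epsilon$-regularity threshold is a universal small constant, so to contradict $|Rm|=1$ at the base point you must exhibit scale-invariant energy near it that is \emph{smaller} than that constant. But the bounds you transport to the rescaled cylinder are $\int |Rm|^n\,\omega^n \le C_E^n$ (scale-invariant but arbitrarily large) and, for $n=2$, $\mathrm{Ca}\le 2D_0^2/\bar t^{\,2}$, which via your own inequality $\bar t\ge \sqrt{\lambda/Q}$ becomes $\mathrm{Ca}\le 2D_0^2\,Q/\lambda$ --- this \emph{grows} with $Q$, and even in the regime $\bar t\ge 1$ it is only $\le 2D_0^2$, an arbitrary given constant. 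No choice of $\lambda=\lambda(D_0,C_E,n)$ makes any of these quantities dip below a universal $\epsilon$. The most $\epsilon$-regularity can yield here is energy quantization (each bubble costs $\ge\epsilon$, hence at most $C_E^n/\epsilon$ bubbles), which bounds the number of blowup points but does not forbid them; turning that into a contradiction would require exactly the bubbling/limit analysis you set out to avoid.

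The paper's finite-scale mechanism is of a different nature, and it is the missing idea: a convex-analytic \emph{maximum domain} theorem (Theorem \ref{max_domain}). Under the natural rescaling $\tilde u(t,x)=\lambda u\left((t-t_0)/\lambda^2,(x-x_0)/\lambda\right)$, the gradient bound $|Du|<M$ of the symplectic potential (from \cite{FH}, with $M$ depending on $d(\varphi(0),0)$) is \emph{scale-invariant}, the lower bound $D_r\tilde u>C_0$ outside the unit ball (again from the analysis of \cite{FH}) and the scale-invariant bound $\left(\int |\tilde u^{ij}_{~ij}|^n\,d\mu\right)^{1/n}<C_E$ persist, while the domain on which all of this holds grows to $[-\lambda,\lambda]^n$. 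The paper then shows --- by integrating Abreu's operator against $(u-R)^2$ on $\{u<R\}$ together with the H\"older-inequality argument of Proposition \ref{analytic_result} --- that a convex function with these three properties can only live in a ball of explicit radius $1+R_0/C_0$; choosing $\lambda=2+R_0/C_0$ is the contradiction. Nothing in your proposal plays this role. Two smaller corrections: the time profile does not come from the Calabi-energy decay (the paper uses that decay only in the proof of Theorem \ref{smooth}), but from the fourth-order time rescaling; the proof in fact establishes the threshold $\max(\lambda,\lambda/\sqrt{2t})$, which for small $t$ is \emph{stronger} than the stated $\lambda/t^2$, so your assertion that $t^{-1/2}$ is the cruder rate is backwards. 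Likewise, scale invariance of $\mathrm{Ca}$ at $n=2$ is not what matters: the relevant scale-invariant quantity is $\int|Rm|^n$ in complex dimension $n$ --- which is precisely why the hypothesis carries the $n$-th power --- and the theorem and its proof work for all $n$.
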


\begin{rmk}
Our theorem strengthens the results obtained in \cite{FH}:

\begin{itemize}

\item For $n \ge 3$ with the assumption that the total energy is controlled, we are not able to control the Riemann curvature at all in \cite{FH}. 

\item For $n=2$, we only be able to prove an inexplicit bound of the curvature long the Calabi flow in \cite{FH}.

\end{itemize}

We are able to strengthen the results in \cite{FH} because we avoid going to the limiting spaces. In a forthcoming paper, this idea will be generalized to other cases. It seems that the only obstacle to prove the long time existence of the Calabi flow is the non-collapsing property of the Calabi flow. In the setting of our paper, the non-collapsing property only depends on the control of the total energy.
\end{rmk}

Let us discuss an application of our main theorem : the smoothing property of the Calabi flow. Given a weak K\"ahler metric, a natural question is that if we could smooth it and how we smooth it. There are many important work in this area. For example, Chen-Tian-Zhang use the K\"ahler Ricci flow to smooth a weak K\"ahler metric \cite{CTZ}. Similar to \cite{CTZ}, we first need to define a unique weak Calabi flow starting from a weak K\"ahler metric. Then we show that this weak Calabi flow becomes smooth immediately. The existence of the weak Calabi flow follows from Streets' work in \cite{St2} which uses a general theory of Mayer \cite{Ma}. Further development of the weak Calabi flow can be found on \cite{St3, BDL}. The remaining task for us is to show that the weak Calabi flow is a smooth one for $t > 0$.

Let $\mathcal{S}$ be the set of all smooth symplectic potentials and $\mathcal{E}$ be the completion of $\mathcal{S}$ in the sense of the Mabuchi distance. Our result is:

\begin{thm}
\label{smooth}
Let $X = \CC^2 / ( \ZZ^2 + i \ZZ^2) $. Suppose $u_0 \in \mathcal{E}$ and its Mabuchi energy is finite. Let $u(t),~ t \ge 0$ be the weak Calabi flow starting from $u_0$. Then for any $t > 0$, $u(t) \in \mathcal{S}$.
\end{thm}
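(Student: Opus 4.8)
The plan is to reduce Theorem \ref{smooth} to the explicit curvature bound of Theorem \ref{main} by a smooth-approximation argument, exploiting the fact that in complex dimension two the total curvature energy is controlled by the Calabi energy. First I would approximate the initial datum: choose $u_0^{(k)} \in \mathcal{S}$ with $u_0^{(k)} \to u_0$ in the Mabuchi distance $d$ and with converging Mabuchi energies $\mathcal{M}(u_0^{(k)}) \to \mathcal{M}(u_0) < \infty$ (a recovery sequence in $\mathcal{E}$, available from the approximation theory underlying the construction of the weak flow in \cite{St2, St3, BDL}). For each $k$ let $u^{(k)}(t)$ be the smooth Calabi flow issuing from $u_0^{(k)}$; by \cite{FH} this flow exists for all $t \ge 0$. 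Since $X$ is a torus, $c_1 = 0$ and the average scalar curvature $\bar S$ vanishes, so the Calabi energy equals $\int_X S^2\,\omega^2$, and the weak flow $u(t)$ is the $d$-limit of the $u^{(k)}(t)$ by the contraction property $d(u^{(k)}(t), u(t)) \le d(u_0^{(k)}, u_0) \to 0$ of the gradient flow.

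The heart of the argument is a uniform-in-$k$ curvature bound at positive times. Along the smooth Calabi flow one has $\frac{d}{dt}\mathcal{M}(u^{(k)}(t)) = -\int_X S^2\,\omega^2(t)$, and the Calabi energy is monotonically non-increasing; integrating and using the lower bound $m := \inf_{\mathcal{E}}\mathcal{M} > -\infty$ (attained at the flat metric) gives the regularizing estimate
$$
\int_X S^2\,\omega^2(t) \;\le\; \frac{\mathcal{M}(u_0^{(k)}) - m}{t}, \qquad t > 0.
$$
Because $X = \CC^2/(\ZZ^2 + i\ZZ^2)$ has $\chi(X) = \tau(X) = 0$, the Chern--Weil representations of the Chern numbers, together with the Kähler-surface identity $|W^+|^2 = S^2/24$, reduce the total curvature integral to $\int_X |Rm|^2\,\omega^2 = c\int_X S^2\,\omega^2$ for an explicit constant $c$, the topological terms dropping out. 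Hence the regularizing estimate upgrades to a uniform bound on $\big(\int_X |Rm^{(k)}|^2\,\omega^2\big)^{1/2}$ on every interval $[t_0, \infty)$, $t_0 > 0$. Combined with a uniform bound on $d(u^{(k)}(t), 0)$ — which follows from the non-expansiveness of the Calabi flow toward its static solution at the flat metric (the origin $0$ of $d$), making $t \mapsto d(u^{(k)}(t), 0)$ non-increasing and hence controlled by $d(u_0, 0)$ — Theorem \ref{main}, applied to the flows restarted at time $t_0$, yields a bound on $|Rm^{(k)}(t)|$ uniform in $k$ on $[2t_0, \infty)$.

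With uniform curvature bounds in hand, the Shi-type estimates of \cite{St} give uniform bounds on all covariant derivatives of $Rm^{(k)}$, hence uniform $C^\infty_{\mathrm{loc}}$ bounds on the symplectic potentials $u^{(k)}(t)$ for $t \ge 2t_0$. Passing to a subsequence, $u^{(k)}(t) \to u_\infty(t)$ in $C^\infty$ with $u_\infty(t) \in \mathcal{S}$. Since $C^\infty$ convergence implies $d$-convergence, this smooth limit must coincide with the $d$-limit of the approximating flows, namely the weak flow $u(t)$; therefore $u(t) = u_\infty(t) \in \mathcal{S}$ for every $t \ge 2t_0$. As $t_0 > 0$ is arbitrary, $u(t) \in \mathcal{S}$ for all $t > 0$.

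I expect the main obstacle to lie in the uniform estimates of the middle step — specifically in producing a smooth recovery sequence with controlled Mabuchi energy and in the passage from the $d$-topology to the $C^\infty$-topology at the end. The conversion of the (only weakly controlled) Calabi energy into a genuine bound on $\int_X |Rm|^2\,\omega^2$ is exactly where the restriction $n=2$ enters and is indispensable: for $n \ge 3$ the Chern--Weil identities no longer close up, consistent with the Remark following Theorem \ref{main}. Once the uniform curvature bound is secured, Theorem \ref{main} does the essential work, and the remaining regularity bootstrap and identification of limits, while technical, are by now standard in the Calabi-flow literature.
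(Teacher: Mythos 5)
Your proposal is correct and follows essentially the same route as the paper's own proof: approximate $u_0$ by smooth potentials with converging Mabuchi energies, run the smooth flows, derive $Ca(u_m(t)) \le C/t$ from the Mabuchi-energy decay, observe that for $n=2$ on the torus this controls the total energy $\left( \int_X |Rm|^2 \, \omega^2 \right)^{1/2}$ (the Chern--Weil point you spell out is left implicit in the paper), apply Theorem \ref{main} to get curvature bounds uniform in the approximation parameter at positive times, and identify the $C^\infty$-limit with the weak flow via the contraction property $d(u_m(t), u(t)) \le d(u_m, u_0)$. The one substantive divergence is the recovery sequence, which you outsource to \cite{St2, St3, BDL}: since the theorem's hypothesis of finite Mabuchi energy is the explicit entropy formula $\int_P \log \det (D^2 u_0) \, dx > -\infty$, one needs smooth, torus-invariant approximants whose entropies converge to this particular value, and the cited references (which construct the weak flow, or work with the abstract greatest lower semicontinuous extension on a compact K\"ahler manifold) do not supply this off the shelf in the periodic setting --- closing precisely that gap is the content of the paper's Lemma \ref{approximate}, proved by a mollification and diagonal argument following \cite{TW, ZZ}.
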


Since the minimizer of the Mabuchi energy in $\mathcal{E}$ is a fixed point in the weak Calabi flow, we have an immediate corollary:
\begin{cor}
Any minimizer of the Mabuchi energy in $\mathcal{E}$ belongs to $\mathcal{S}$.
\end{cor}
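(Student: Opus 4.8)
The plan is to deduce the corollary directly from Theorem~\ref{smooth} by running the weak Calabi flow from a minimizer. Let $u_0 \in \mathcal{E}$ be a minimizer of the Mabuchi energy $\mathcal{M}$. First I would record that the hypotheses of Theorem~\ref{smooth} are satisfied: since $X = \CC^2/(\ZZ^2 + i\ZZ^2)$ carries the flat cscK metric, $\mathcal{M}$ is bounded below on $\mathcal{E}$, so its infimum is finite and hence a minimizer $u_0$ has finite Mabuchi energy. Thus the weak Calabi flow $u(t),\, t \ge 0$, starting from $u_0$ is well defined and Theorem~\ref{smooth} applies to it.

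The crux is the stationarity claim already flagged in the text: that a minimizer of $\mathcal{M}$ is a fixed point of the weak Calabi flow, so that $u(t) = u_0$ for all $t \ge 0$. The weak Calabi flow is constructed (following Streets \cite{St2} via Mayer's theory \cite{Ma}) as the gradient flow of $\mathcal{M}$ on the metric completion $\mathcal{E}$, along which $\mathcal{M}$ is non-increasing. Since $\mathcal{M}(u(t)) \le \mathcal{M}(u_0) = \inf_{\mathcal{E}} \mathcal{M}$, the energy must be constant along the flow. Combined with the geodesic convexity of the Mabuchi energy on $\mathcal{E}$ and the contraction/uniqueness properties of Mayer's gradient-flow semigroup, this forces the trajectory to be constant, i.e.\ $u(t) \equiv u_0$: the metric gradient of $\mathcal{M}$ vanishes at a global minimizer, and the flow does not move such a point.

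Granting stationarity, the conclusion is immediate. By Theorem~\ref{smooth}, $u(t) \in \mathcal{S}$ for every $t > 0$; but $u(t) = u_0$, so $u_0 \in \mathcal{S}$, which is exactly the assertion that every minimizer is smooth. I expect the only step genuinely requiring care to be the stationarity claim, namely making precise, in the weak/metric setting, that ``minimizer of $\mathcal{M}$'' coincides with ``fixed point of the weak Calabi flow.'' If geodesic convexity is not invoked directly, the same conclusion can be reached purely by monotonicity and uniqueness: the flow strictly decreases $\mathcal{M}$ unless the initial datum is already critical, and the weak Calabi flow issuing from a critical point is the constant one. Everything else is a one-line substitution $u_0 = u(t)$ into Theorem~\ref{smooth}.
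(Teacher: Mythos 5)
Your proposal is correct and takes essentially the same route as the paper: the paper deduces the corollary in one line from Theorem~\ref{smooth} together with the observation that a minimizer of the Mabuchi energy in $\mathcal{E}$ is a fixed point of the weak Calabi flow, so that $u_0 = u(t) \in \mathcal{S}$ for any $t > 0$. Your elaboration of the stationarity claim (via the gradient-flow/minimizing-movement structure of the weak Calabi flow, under which a global minimizer cannot move) and of the finiteness of the minimizer's energy simply fills in details the paper leaves implicit.
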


\begin{rmk}
The regularity of weak minimizers of the Mabuchi energy in the general case has been proved by Berman-Darvas-Lu \cite{BDL2}, assuming the existence of smooth cscK metrics. Their result partially confirms conjectures of Chen \cite{Chen} and Darvas-Rubinstein \cite{DR}. It is also shown in \cite{DR} that the regularity of weak minimizers implies the properness of the Mabuchi energy \cite{Ti1, Ti2, DR}.

In our proof, we do not use the fact that there exists cscK metrics in the K\"ahler class. Thus we expect that the Calabi flow can smooth the weak minimizers of the Mabuchi energy in general. As pointed out in \cite{DR}, the regularity problem of the weak minimizers is the main obstacle in proving the existence of cscK metrics, assuming the Mabuchi energy is proper in the K\"ahler class.
\end{rmk}

\section*{Acknowledgement} The author would like to thank Professor Pengfei Guan, Tarm\'as Darvas, Bing Wang, Chengjian Yao for stimulating discussions. 

\section{Introduction}

Let $X = \CC^n / (\ZZ^n + \sqrt{-1} \ZZ^n). $ Let $z_i = \xi_i + \sqrt{-1} t_i, ~ i = 1, \ldots, n$ be the holomorphic coordinates of $\CC^n$. Let $\omega = \frac{\sqrt{-1}}{2} \sum_{i=1}^n z_i \wedge \bar{z}_i$ be the flat K\"ahler metric. Let 
$$
\mathcal{H} = \{\varphi \in C^\infty(X) ~|~ \omega_\varphi = \omega + \sqrt{-1} \partial \bar{\partial} \varphi > 0 \}
$$
be the set of relative K\"ahler potentials. Feng and Szekelyhidi \cite{FS} considered the subset $\mathcal{H}_\TT$ which consists of all the torus invariant relative K\"ahler potentials, i.e.,
$$
\mathcal{H}_\TT = \{ \varphi \in \mathcal{H} ~ | ~ \varphi(z_1, \ldots, z_n) = \varphi(\xi_1, \ldots, \xi_n) \}.
$$
Thus for any $\varphi \in \mathcal{H}_\TT$, $\varphi$ is a smooth, periodic function on $\RR^n$ with period $[-1, 1]^n$ such that $$\psi = \varphi + \frac{1}{2} \sum_{i=1}^n \xi_i^2$$ is a smooth, strictly convex function on $\RR^n$. Feng and Szekelyhidi \cite{FS} considered the Legendre transform of $\psi$: $u$. Let $(x_1, \ldots, x_n) = \nabla \psi(\xi_1, \ldots, \xi_n)$ be the dual coordinates. We have

\begin{lemma}
$f(x) = u(x) - \frac{1}{2} \sum_{i=1}^n x_i^2$ is a periodic function on $\RR^n$ with period $[-1, 1]^n$. 
\end{lemma}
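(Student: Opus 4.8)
The plan is to read off the periodicity of $f$ from Legendre duality together with a quasi-periodicity property of the gradient map $x = \nabla \psi(\xi)$. Write $e_j$ for the $j$-th standard basis vector of $\RR^n$. Since $\psi = \varphi + \frac12 \sum_i \xi_i^2$ with $\varphi$ of period $[-1,1]^n$, we have $\nabla \psi(\xi) = \nabla \varphi(\xi) + \xi$, and as $\nabla \varphi$ is $2$-periodic in each variable this yields the key relation
\begin{equation}
\nabla \psi(\xi + 2 e_j) = \nabla \psi(\xi) + 2 e_j .
\end{equation}
I would first record that $\nabla \psi$ is in fact a diffeomorphism of $\RR^n$ onto itself: its Hessian $\nabla^2 \psi$ is positive definite by strict convexity, and $|\nabla \psi(\xi)| \ge |\xi| - \sup |\nabla \varphi|$ shows it is proper, so it is a covering map of the simply connected $\RR^n$ and hence bijective. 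Consequently $u$ is a globally defined smooth strictly convex function, with $u(x) = \langle x, \xi \rangle - \psi(\xi)$ whenever $x = \nabla \psi(\xi)$, and so $f$ is a well-defined smooth function on all of $\RR^n$.

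Next I would compute the effect of the translation $x \mapsto x + 2 e_j$ on $u$. Fixing $x = \nabla \psi(\xi)$, the relation above gives $x + 2 e_j = \nabla \psi(\xi + 2 e_j)$, so $\xi + 2 e_j$ is the point realizing $u(x + 2 e_j)$. Substituting into the Legendre formula and expanding $\psi(\xi + 2 e_j) = \varphi(\xi) + \frac12 \sum_i \xi_i^2 + 2\xi_j + 2$, the cross terms $2\xi_j$ cancel and one obtains
\begin{equation}
u(x + 2 e_j) = u(x) + 2 x_j + 2 .
\end{equation}

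Finally I would substitute $u = f + \frac12 \sum_i x_i^2$. Since $\frac12 \sum_i (x + 2 e_j)_i^2 = \frac12 \sum_i x_i^2 + 2 x_j + 2$, comparing with the transformation rule for $u$ shows that the linear term $2 x_j$ and the constant $2$ cancel exactly, leaving $f(x + 2 e_j) = f(x)$ for every $j$; thus $f$ is $2$-periodic in each coordinate, i.e. of period $[-1,1]^n$. The calculation is routine once the right structural observation is in hand, so there is no serious obstacle here; the only point requiring care is the quasi-periodicity relation for $\nabla \psi$, which says that the Legendre transform intertwines the lattice translation $\xi \mapsto \xi + 2 e_j$ with the same translation in the dual variable. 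The two quadratic normalizations $\frac12 \sum \xi_i^2$ and $\frac12 \sum x_i^2$ are then precisely what is needed to absorb the inhomogeneous terms the transform produces.
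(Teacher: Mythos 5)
Your proof is correct and takes essentially the same approach as the paper: both arguments hinge on the quasi-periodicity of the gradient map (the dual coordinate of $x + 2e_j$ is $\xi + 2e_j$) and then expand the Legendre formula $u(x) = \langle x, \xi\rangle - \psi(\xi)$ to see the inhomogeneous terms cancel. The only difference is that you spell out details the paper leaves implicit, namely the verification of the relation $\nabla\psi(\xi + 2e_j) = \nabla\psi(\xi) + 2e_j$ and the surjectivity of $\nabla\psi$, which is added rigor rather than a different method.
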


\begin{proof}
We only need to show that $f(x_1, \ldots, x_n) = f(x_1 + 2, x_2, \ldots, x_n)$. Then the dual coordinates of $(x_1+2, x_2, \ldots, x_n)$ is $(\xi_1 + 2, \xi_2, \ldots, \xi_n)$.
Thus
\begin{align*}
& ~f(x_1+2, x_2, \ldots, x_n) - f(x_1, \ldots, x_n) \\
=&~ u(x_1+2, x_2, \ldots, x_n) - u(x_1, \ldots, x_n) - 2 x_1 - 2\\
=& ~(x_1 + 2) (\xi_1 + 2) - x_1 \xi_1 - \psi(\xi_1 + 2, \xi_2, \ldots, \xi_n) + \psi(\xi_1, \ldots, \xi_n) - 2 x_1 - 2\\
=&~0.
\end{align*}

\end{proof}

Let $\mathcal{S}$ be the set of all smooth periodic functions $f(x)$ with period $[-1, 1]^n$ such that $u(x) = f(x) + \frac{1}{2} \sum_{i=1}^n x_i^2$ is a smooth, strictly convex function. Then there exists a one-to-one correspondence between $\mathcal{H}_\TT$ and $\mathcal{S}$ through the Legendre transform. We also denote $\mathcal{E}$ to be the completion of $\mathcal{S}$ in the sense of the Mabuchi distance.

Let $\varphi(t), t \in [0, T)$ be a Calabi flow in $\mathcal{H}_\TT$ be a one parameter family of relative K\"ahler potentials satisfying the Calabi flow equation, i.e.,
$$
\frac{\partial \varphi(t)}{\partial t} = S(t) - \underline{S},
$$
where $S(t)$ is the scalar curvature of $\omega(t) = \omega + \sqrt{-1} \partial \bar{\partial} \varphi(t)$ and $\underline{S} = 0$ is its average. Then the Legendre dual $u(t)$ of $\psi(t) = \varphi(t) + \frac{1}{2} \sum_{i=1}^n \xi_i^2$ satisfies the following equation
$$
\frac{\partial u(t)}{\partial t} = -S(t),
$$
where $S(t) = - \sum_{i j} u^{ij}_{~ij}(t)$ is the Abreu's equation \cite{A1}.

\section{Maximum domain of a special convex function}

In this section, we want to understand if one can have a special (see definition below) convex function $u$ on $\RR^n$. If not, what is the maximum domain in $\RR^n$ one can have for $u$. Let us start with an analytic result:
\begin{prop}
\label{analytic_result}
Let $f ~ : ~ [1, \infty) \to \RR^+$ be a positive function with $$\int_1^\infty \frac{1}{f(x)} ~ dx ~ < ~ M.$$ Then for any $n, C > 0$, there exists $x_0 \in [1, \infty)$ depending only on $C, M, n$ such that $$\int_1^{x_0} f(x) x^{n-1} ~ dx > C x_0^{n+1}.$$ 
\end{prop}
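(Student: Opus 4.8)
The plan is to argue by contradiction. Suppose that for \emph{every} $x_0$ in a large interval $[1, X]$, with $X$ to be chosen depending only on $C, M, n$, the reverse inequality $\int_1^{x_0} f(x)\, x^{n-1}\, dx \le C x_0^{n+1}$ holds. I will show this forces $\int_1^{X} \frac{1}{f}\, dx$ to be large, eventually exceeding $M$, which contradicts the hypothesis $\int_1^\infty \frac{1}{f}\, dx < M$.

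The main tool is the Cauchy--Schwarz inequality, applied through the splitting $x^{(n-1)/2} = \bigl(\sqrt{f(x)}\, x^{(n-1)/2}\bigr)\cdot \frac{1}{\sqrt{f(x)}}$. On any subinterval $[a,b] \subset [1,\infty)$ this gives
\[
\int_a^b \frac{1}{f}\, dx \ \ge\ \frac{\left(\int_a^b x^{(n-1)/2}\, dx\right)^2}{\int_a^b f(x)\, x^{n-1}\, dx}.
\]
Here the numerator is an explicit power of $a$ and $b$, while the denominator is controlled by the contradiction hypothesis via $\int_a^b f\, x^{n-1}\, dx \le \int_1^b f\, x^{n-1}\, dx \le C b^{n+1}$.

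The key observation, and the step I expect to require the most care, is that a single application of Cauchy--Schwarz on the whole interval $[1, X]$ is useless: the resulting lower bound tends to the fixed constant $\frac{4}{(n+1)^2 C}$ as $X \to \infty$ and never reaches $M$. Instead I would decompose $[1, 2^K]$ into dyadic blocks $[2^k, 2^{k+1}]$ for $k = 0, \ldots, K-1$, on which the estimate is \emph{scale invariant}. A direct computation of the numerator and the bound $\int_{2^k}^{2^{k+1}} f\, x^{n-1}\, dx \le C (2^{k+1})^{n+1}$ shows that the $k$-dependence cancels, leaving
\[
\int_{2^k}^{2^{k+1}} \frac{1}{f}\, dx \ \ge\ \delta := \frac{4\,(2^{(n+1)/2}-1)^2}{(n+1)^2\, C\, 2^{n+1}} > 0,
\]
a positive constant independent of $k$. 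Summing over the $K$ blocks gives $\int_1^{2^K} \frac{1}{f}\, dx \ge K\delta$.

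Finally, choosing any integer $K > M/\delta$ makes $K\delta > M$, producing a contradiction. Hence the reverse inequality must fail for some $x_0 \le 2^K$, which is precisely the desired conclusion; moreover $x_0$ is bounded by $X = 2^{\lceil M/\delta \rceil + 1}$, an explicit quantity depending only on $C, M, n$. The only genuine content is the scale-invariant dyadic accumulation of the reciprocal integral; the remaining work is just bookkeeping of the constants.
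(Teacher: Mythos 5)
Your proposal is correct and is essentially identical to the paper's own proof: the paper also argues by contradiction, applies Cauchy--Schwarz (stated there as H\"older) on the dyadic blocks $[2^i, 2^{i+1}]$, observes that each block contributes the same scale-invariant constant $\frac{4(2^{(n+1)/2}-1)^2}{(n+1)^2 2^{n+1} C}$ to $\int \frac{1}{f}\,dx$, and sums blocks until the total exceeds $M$, yielding the same explicit logarithmic bound on $x_0$. Your refinement of phrasing the contradiction hypothesis on a finite interval $[1,X]$ rather than all of $[1,\infty)$ is a cosmetic tightening, not a different argument.
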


\begin{proof}
	Suppose the conclusion is not true. Then there exists $C, n > 0$ such that for any $x \in [1, \infty)$, we have $$\int_1^x f(x) x^{n-1} ~ dx \le C x^{n+1}.$$ Let us construct a sequence of positive constant $R_i, ~ i = 0, 1, \ldots $ such that $R_i = 2^i$. By H\"older's inequality, we know that
\begin{align*}
&\int_{R_i}^{R_{i+1}} \frac{1}{f(x)}~ dx \\
\ge ~& \frac{\left(\int_{R_i}^{R_{i+1}} x^\frac{n-1}{2} ~ dx\right)^2}{\int_{R_i}^{R_{i+1}} f(x) x^{n-1} ~ dx} \\
\ge~ & \frac{\left(\frac{2}{n+1} \left(R_{i+1}^\frac{n+1}{2} - R_i^\frac{n+1}{2} \right) \right)^2}{2^{n+1} C R_i^{n+1}} \\
 = ~ & \frac{4 \left( 2^\frac{n+1}{2} - 1\right)^2}{(n+1)^2 2^{n+1} C}.
\end{align*}
	
Let $i \to \infty$, we obtain a contradiction. In fact, for any given $n, C > 0$, there exists $x_0$ with
$$
0 \le \ln x_0 \le \left( [\frac{M (n+1)^2 2^{n+1} C}{4 \left( 2^\frac{n+1}{2} - 1\right)^2}] + 1\right) \ln 2
$$such that $\int_1^{x_0} f(x) x^{n-1} ~ dx > C x_0^{n+1}$, where $[x]$ denotes the integer part of $x$.
\end{proof}

Now let us define what a special convex function of type $(M, C_0, C_E)$ is.
\begin{Def}
\label{standard}
$u$ is a special convex function on $\RR^n$ of type $(M, C_0, C_E)$ if $u$ satisfies the following condition:
\begin{itemize}
	\item $u$ is a smooth, strictly convex function. 
	\item $u(0, \ldots, 0) = 0, ~ D u(0, \ldots, 0) = (0, \ldots, 0)$.
	\item For any $x \in \RR^n$, $|Du(x)| < M$.
	\item For any $x$ outside the Euclidean Ball $B_E(O, 1)$, $u_r(x) > C_0$.
	\item $$ \left( \int_{\RR^n} \left( |u^{ij}_{~ij} |\right)^n ~ d \mu \right)^{\frac{1}{n}} < C_E.$$
\end{itemize}
\end{Def}
Let $R > 0$ be a positive constant and let 
$$
f(x) =
\left\{
\begin{array}{ll}
(u(x) - R)^2, & \mathrm{if} ~ u(x) < R;\\
0, & \mathrm{otherwise}.
\end{array}
\right.
$$

We have
\begin{align*}
0 \le &\int_{\RR^n} u^{ij} f_i f_j ~ d \mu \\
=& \frac{1}{2} \int_{\mathrm{supp}(f)} u^{ij} (f^2)_{ij} - 2 u^{ij} f_{ij} f ~ d \mu\\
=& \frac{1}{2} \int_{\mathrm{supp}(f)} u^{ij}_{~ij} (f^2) - 4 n (u - R)^3 - 4 u^{ij} u_i u_j f ~ d \mu\\
\le & \frac{1}{2} \| S(u) \|_{\frac{1}{n}}  \| f^2 \|_{\frac{n-1}{n}} + 2 \int_{\mathrm{supp}(f)} -n (u - R)^3 - u^{ij} u_i u_j f ~ d \mu.\\
\end{align*}
It yields
\begin{equation}
\label{inequality}
- \int_{\mathrm{supp}(f)} n (u - R)^3 ~ d \mu + \frac{C_E}{4} \| f^2 \|_{\frac{n-1}{n}} \ge \int_{\mathrm{supp}(f)} u^{ij} u_i u_j f ~ d \mu.
\end{equation}

Let us analyze the left hand side of the inequality (\ref{inequality}). Since $u$ is a special convex function, for any $x \in \RR^n \backslash B_E(O, 1)$, we have $u_r > C_0$ where $r = d_E(O, x)$. Combining with the fact that $|D u| < M$, we have

\begin{lemma}
There exists a constant $C_1 > 0$ depending only on $n, C_0, M$ such that 
$$
\frac{R^n}{C_1} < Vol(\mathrm{supp}(f)) < C_1 R^n.
$$
\end{lemma}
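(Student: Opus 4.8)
The plan is to identify $\mathrm{supp}(f)$ with the sublevel set $\Omega_R := \{x \in \RR^n : u(x) < R\}$ (up to a measure-zero boundary, $\mathrm{supp}(f) = \overline{\Omega_R}$) and then to sandwich $\Omega_R$ between two Euclidean balls centered at the origin, after which both volume bounds are immediate. Since $u(0,\ldots,0) = 0$, $Du(0,\ldots,0) = 0$, and $u$ is strictly convex, the origin is the global minimum, so $u \ge 0$ everywhere and $\Omega_R$ is a bounded convex open set containing $O$.

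For the inner inclusion, which gives the lower volume bound, I would integrate the gradient bound $|Du| < M$ along rays from the origin. Writing $\hat{x} = x/|x|$, the radial derivative satisfies $u_r = Du \cdot \hat{x} \le |Du| < M$, so from $u(0)=0$ we get $u(x) = \int_0^{|x|} u_r \, dt < M|x|$. Hence every $x$ with $|x| \le R/M$ lies in $\Omega_R$, i.e. $B_E(O, R/M) \subseteq \Omega_R$, and taking volumes yields $\mathrm{Vol}(\Omega_R) \ge \omega_n (R/M)^n$, where $\omega_n := \mathrm{Vol}(B_E(O,1))$. This already establishes the left inequality with, for instance, $C_1 \ge M^n/\omega_n$, valid for every $R > 0$.

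For the outer inclusion, which gives the upper volume bound, I would use the hypothesis $u_r > C_0$ outside $B_E(O,1)$. For $|x| \ge 1$, integrating the radial derivative from radius $1$ and using $u \ge 0$ at the radius-$1$ point gives $u(x) > C_0(|x| - 1)$. Thus $u(x) < R$ forces $|x| < 1 + R/C_0$, so $\Omega_R \subseteq B_E(O, 1 + R/C_0)$ and $\mathrm{Vol}(\Omega_R) \le \omega_n (1 + R/C_0)^n$. Combining the two inclusions, the natural choice is $C_1 = \max\!\left( M^n/\omega_n, \ \omega_n (2/C_0)^n \right)$.

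The main subtlety is the additive constant $1$ in the outer radius. For $R$ bounded below (say $R \ge C_0$) one has $1 + R/C_0 \le 2R/C_0$, so $\mathrm{Vol}(\Omega_R) \le \omega_n (2/C_0)^n R^n$ and the right inequality holds cleanly; this is exactly the regime relevant to the later blow-up analysis. For very small $R$, however, the sublevel set is governed by the (possibly degenerate) Hessian of $u$ at the origin, where the hypotheses provide no lower bound, and $\mathrm{Vol}(\Omega_R)$ can behave like $R^{n/2}$, which dominates $R^n$ as $R \to 0$. So the honest statement is that the double-sided estimate is an estimate for $R$ large, and I would either restrict to $R \ge C_0$ or enlarge $C_1$ over the finite range of $R$ actually used; the inner inclusion, by contrast, needs no such restriction. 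The essential geometric content—trapping the sublevel set between $B_E(O, R/M)$ and $B_E(O, 1 + R/C_0)$—is entirely elementary once the ray integrations are set up.
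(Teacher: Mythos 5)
Your proof is correct, and there is nothing in the paper to compare it against: the paper's entire ``proof'' of this lemma is the sentence ``The proof of this lemma is elementary and we leave it to the interested readers,'' and your two ray-integration inclusions $B_E(O, R/M) \subseteq \{u < R\} \subseteq B_E(O, 1 + R/C_0)$ are exactly the elementary argument being alluded to. Your caveat about small $R$ is also a genuine observation rather than excess caution: for any smooth $u$ the Taylor expansion at the minimum gives $u(x) \le C_u |x|^2$ near $O$, hence $\mathrm{Vol}(\{u < R\}) \ge c_u R^{n/2}$, which dominates $R^n$ as $R \to 0$; so the upper bound $\mathrm{Vol}(\mathrm{supp}(f)) < C_1 R^n$ with $C_1$ depending only on $n, C_0, M$ cannot hold for all $R > 0$, and the lemma must be read with $R$ bounded below (say $R \ge 1$, which gives $1 + R/C_0 \le (1 + 1/C_0)R$ and a clean constant). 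This restriction is harmless where the lemma is actually used: it is applied with $R = R_0 > 1$ (indeed $R_0 \ge 2M$, since the radial integration over $[1, R_0/(2M)]$ must have a nontrivial range), so your argument covers every application in the paper.
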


\begin{proof}
The proof of this lemma is elementary and we leave it to the interested readers.
\end{proof}

As a corollary, we have

\begin{cor}
$$
- \int_{\mathrm{supp}(f)} n (u - R)^3 ~ d \mu + \frac{C_E}{4} \| f^2 \|_{\frac{n-1}{n}} \le C_2 R^{n+3},
$$
where $C_2$ depends only on $n, C_0, M, C_E$.
\end{cor}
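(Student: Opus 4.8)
The plan is to estimate each of the two terms on the left-hand side separately, using the pointwise size of $u$ on $\mathrm{supp}(f)$ together with the volume bound just established. The first thing I would record is an elementary pointwise estimate. Since $u$ is strictly convex with $u(0)=0$ and $Du(0)=0$, the origin is the global minimum of $u$, so $u \ge 0$ everywhere on $\RR^n$. On $\mathrm{supp}(f) = \{u < R\}$ this gives $0 \le u < R$, and hence
$$
0 \le R - u \le R, \qquad 0 \le f = (R-u)^2 \le R^2.
$$

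For the first term, I would use $-n(u-R)^3 = n(R-u)^3 \le nR^3$ on the support and integrate, so that by the preceding volume bound
$$
- \int_{\mathrm{supp}(f)} n(u-R)^3 \, d\mu \;\le\; nR^3 \cdot Vol(\mathrm{supp}(f)) \;\le\; nC_1 R^{n+3}.
$$
For the second term, I would first note the norm convention used in the Hölder step: the subscript denotes the reciprocal of the $L^p$-exponent, so $\|f^2\|_{\frac{n-1}{n}}$ is the $L^{n/(n-1)}$-norm of $f^2$, Hölder-conjugate to the $L^n$-norm $\|S(u)\|_{\frac{1}{n}}$, which is bounded by $C_E$ in the definition of a special convex function. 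Then on the support $(f^2)^{n/(n-1)} = (R-u)^{4n/(n-1)} \le R^{4n/(n-1)}$, and the volume bound gives
$$
\| f^2 \|_{\frac{n-1}{n}} \;\le\; \left( R^{\frac{4n}{n-1}} \cdot C_1 R^n \right)^{\frac{n-1}{n}} = C_1^{\frac{n-1}{n}} R^{\,n+3},
$$
the exponent collapsing because $\frac{n-1}{n}\bigl(n + \frac{4n}{n-1}\bigr) = (n-1) + 4 = n+3$.

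Adding the two estimates proves the corollary with $C_2 = nC_1 + \tfrac{C_E}{4} C_1^{(n-1)/n}$, which depends only on $n, C_0, M, C_E$ since $C_1$ depends only on $n, C_0, M$. The argument is essentially bookkeeping, and I expect no genuine obstacle. The only points requiring care are the exponent arithmetic confirming that both terms scale precisely as $R^{n+3}$---exactly the power on the right-hand side---and the correct reading of the norm convention, so that the total-energy bound $C_E$ applies to $\|S(u)\|_{\frac{1}{n}}$.
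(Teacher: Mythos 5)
Your proposal is correct and follows essentially the same route as the paper: bound each of the two terms separately using the pointwise bound $0 \le f \le R^2$ on $\mathrm{supp}(f)=\{u<R\}$ together with the volume bound $Vol(\mathrm{supp}(f)) \le C_1 R^n$ from the preceding lemma, with the exponent arithmetic $\frac{n-1}{n}\left(\frac{4n}{n-1}+n\right)=n+3$ closing the second estimate. If anything, your write-up is slightly more careful than the paper's (you justify $u\ge 0$ explicitly, keep the factor $n$ in the first term, and track the constant $C_2 = nC_1 + \frac{C_E}{4}C_1^{(n-1)/n}$), but the argument is the same.
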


\begin{proof}
It is easy to see that 
\begin{align*}
&- \int_{\mathrm{supp}(f)} n (u - R)^3 ~ d \mu \\
\le & R^3 Vol(\mathrm{supp}(f)) \\
\le & C R^{n+3}.
\end{align*}

For the second term, we have

\begin{align*}
& \| f^2 \|_{\frac{n-1}{n}} \\
\le & \left(  \int_{\mathrm{supp}(f)} f^\frac{2n}{n-1} ~ d \mu \right)^{\frac{n-1}{n}}\\
\le & R^4 \left(Vol(\mathrm{supp}(f)) \right)^\frac{n-1}{n}\\
\le & C R^{n+3}.
\end{align*}

Combining the above two terms, we obtain the conclusion.
\end{proof}

Now let us analyze the right hand side of the inequality (\ref{inequality}). Let $$(r, \theta = (\theta_1, \ldots, \theta_{n-1}))$$ be the spherical coordinates. Since $0 < D_r u (r, \theta) < M$ for any $r > 0, ~ \theta \in \SS^{n-1}$, for any $0 < r \le \frac{R}{2 M}$, we have $u(r, \theta) < R/2$. Using the coordinate system as in Lemma 3 of \cite{D2} and Lemma 4.3 of \cite{H1} (without changing the value of $u_{rr}$), direct calculations show that:
\begin{align*}
& \int_{\mathrm{supp}(f)} u^{ij} u_i u_j f ~ d \mu \\
\ge & \int_{\SS^{n-1}} \int_1^{\frac{R}{2 M}} u^{rr} u_r u_r \frac{R^2}{4} ~ r^{n-1} d r d \sigma_{\SS^{n-1}} \\
\ge & \frac{R^2 C_0^2}{4} \int_{\SS^{n-1}} \int_1^{\frac{R}{2 M}} u^{rr} ~ r^{n-1} d r d \sigma_{\SS^{n-1}} \\
= & \frac{R^2 C_0^2}{4} \int_{\SS^{n-1}} \int_1^{\frac{R}{2 M}} \frac{1}{u_{rr}} ~ r^{n-1} d r d \sigma_{\SS^{n-1}}.
\end{align*}

Let $C_3 > 0$ be a constant to be determined later. Then by Proposition (\ref{analytic_result}), there exists $R_0 > 1$ such that 
$$
\int_1^{\frac{R_0}{2 M}} u^{rr} ~ r^{n-1} d r > C_3  \left( \frac{R_0}{2M} \right)^{n+1}.
$$

Thus we have
\begin{align*}
C_2 R_0^{n+3} \ge C_3 \frac{R_0^2 C_0^2}{4} Vol(\SS^{n-1}) \left(\frac{R_0}{2M}\right)^{n+1}.
\end{align*}

We can choose $C_3 = \frac{4 C_2 (2M)^{n+1}}{ C_0^2 Vol(\SS^{n-1})} + 1$ to obtain a contradiction. By Proposition (\ref{analytic_result}), we have the following theorem:

\begin{thm}
\label{max_domain}
For a special function of type $(M, C_0, C_E)$. Its domain is contained in the Euclidean ball centered at $O$ with radius
$$
1 + \frac{R_0}{C_0},
$$
where 
$$
R_0 = 2M \exp \left(  \left( [\frac{M (n+1)^2 2^{n+1} C_3}{4 \left( 2^\frac{n+1}{2} - 1\right)^2}] + 1\right) \ln 2 \right).
$$
\end{thm}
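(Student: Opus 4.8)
The plan is to argue by contradiction: assume the domain of $u$ reaches beyond $B_E(O, 1 + R_0/C_0)$ and produce an inconsistency by testing the curvature integral against the weight built before inequality (\ref{inequality}), namely $f = (u - R)^2$ on the sublevel set $\{u < R\}$. The first task is the geometric localization of $\mathrm{supp}(f)$. Since $u \ge 0$ with $u(O) = 0$ and $u_r > C_0$ for $r > 1$, radial integration gives $u(x) > C_0(|x| - 1)$ outside the unit ball, so $\{u < R\} \subset B_E(O, 1 + R/C_0)$; on the other hand $|Du| < M$ forces $u < R/2$ on $B_E(O, R/(2M))$, so this smaller ball lies inside the support. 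Taking $R = R_0$ explains the radius in the statement: the support sits in $B_E(O, 1 + R_0/C_0)$, and if the domain contained this ball then $f$ would be compactly supported inside the domain and every integration by parts below would be boundary-free.

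With $f$ compactly supported I would assemble the two bounds already prepared. The Corollary controls the left side of (\ref{inequality}) from above by $C_2 R^{n+3}$, where $C_2$ absorbs the volume estimate $Vol(\mathrm{supp}(f)) \le C_1 R^n$ and the energy bound $C_E$. For the right side I would pass to spherical coordinates and, using the normalization of Lemma 3 of \cite{D2} and Lemma 4.3 of \cite{H1} that leaves $u_{rr}$ unchanged, reduce $\int_{\mathrm{supp}(f)} u^{ij} u_i u_j f\,d\mu$ to the radial lower bound $\frac{R^2 C_0^2}{4}\int_{\SS^{n-1}}\int_1^{R/(2M)} u^{rr} r^{n-1}\,dr\,d\sigma$. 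The decisive input is Proposition \ref{analytic_result}, applied along each ray to the function $u^{rr}(r,\theta) = 1/u_{rr}$ (in the role of $f$ there): the bound $\int_1^\infty u_{rr}\,dr < M$, which follows from $0 \le u_r < M$, is exactly its hypothesis $\int 1/f < M$, so it supplies a radius $x_0 = R_0/(2M)$, with the logarithmic estimate on $\ln x_0$ that yields the stated formula for $R_0$, at which $\int_1^{x_0} u^{rr} r^{n-1}\,dr > C_3 x_0^{n+1}$.

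I would then set $R = R_0$ and chain the two estimates into $C_2 R_0^{n+3} \ge C_3 \frac{R_0^2 C_0^2}{4} Vol(\SS^{n-1}) (R_0/(2M))^{n+1}$. Choosing $C_3 = \frac{4 C_2 (2M)^{n+1}}{C_0^2 Vol(\SS^{n-1})} + 1$ makes the right side strictly larger than the left, the desired contradiction. Hence no special convex function can be defined on a domain containing $B_E(O, 1 + R_0/C_0)$, which is the asserted bound.

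The step I expect to be the main obstacle is the passage from the contradiction to the domain bound itself. The contradiction only materializes when the full sublevel set $\{u < R_0\}$ is compactly contained in the domain, so that the integration by parts carries no boundary term; one must argue that a convex domain reaching past $B_E(O, 1 + R_0/C_0)$ really forces this compact containment. A secondary technical point is checking that the $u_{rr}$-preserving coordinate change genuinely reduces the full quadratic form $\int u^{ij} u_i u_j f$ to the one-dimensional radial integral in $u^{rr}$, so that Proposition \ref{analytic_result} can be applied ray by ray; this is exactly where convexity and the radial rate $u_r > C_0$ must be used together.
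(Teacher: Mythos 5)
Your proposal follows the paper's own proof essentially step for step: the same test function $f=(u-R)^2$ and integration-by-parts inequality (\ref{inequality}), the same upper bound $C_2R^{n+3}$ for its left-hand side via the Corollary, the same reduction of $\int u^{ij}u_iu_jf\,d\mu$ to the radial integral $\frac{R^2C_0^2}{4}\int_{\SS^{n-1}}\int_1^{R/(2M)}u^{rr}r^{n-1}\,dr\,d\sigma$ in the Donaldson--Huang coordinates, the same application of Proposition \ref{analytic_result} to $u^{rr}=1/u_{rr}$ with hypothesis $\int_1^\infty u_{rr}\,dr<M$, and the identical choice $C_3=\frac{4C_2(2M)^{n+1}}{C_0^2\,Vol(\SS^{n-1})}+1$ producing the contradiction and the stated $R_0$. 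The two points you flag as delicate (no boundary terms because $\{u<R_0\}$ is compactly contained in the domain, and the ray-by-ray use of the Proposition) are exactly the places where the paper itself is terse, so your treatment is, if anything, slightly more careful than the original.
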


\section{Modified Blowup analysis}
We will prove Theorem (\ref{main}) in this section.  In \cite{FH}, we obtained the following results:

\begin{prop}
There exists $M > 0$ depending only on $n, ~ \int_{[-1, 1]^n} u^2(0) ~ d \mu$ such that for any $t \in [0, T), ~ x \in [-2, 2]^n$, we have
$$
|D u(t)| < M,
$$
where $D u(t)$ denotes the Euclidean derivative of $u(t)$.
\end{prop}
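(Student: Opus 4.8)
The plan is to prove the gradient bound by a purely \emph{static} estimate, carried out at each fixed time $t$ and using only the convexity of $u(t)$ together with the periodicity of its non-quadratic part; the uniformity in $t$ is then automatic, since the flow enters only through the fact that $\varphi(t)\in\mathcal{H}_\TT$. First I would reduce the claim to a bound on the periodic part. By the lemma of Section 2 we may write $u(t)=f(t)+\frac12\sum_i x_i^2$ with $f(t)$ periodic of period $[-1,1]^n$, and this decomposition persists for all $t\in[0,T)$ because the flow stays torus-invariant. Consequently $Du(t)=x+Df(t)$, so on the cube $[-2,2]^n$ we have $|Du(t)|\le 2\sqrt{n}+\sup_{\RR^n}|Df(t)|$, where by periodicity the supremum is attained over a single period. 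Thus it suffices to bound $\sup|Df(t)|$ by a constant independent of $t$.

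The heart of the argument is a convexity inequality. Since $\omega_\varphi>0$, each $u(t)$ is strictly convex, which means $f(t)+\frac12|x|^2$ is convex and hence $f(t)$ is $(-1)$-semiconvex; it therefore lies above its tangent paraboloids,
\[
f(y)\ \ge\ f(x)+\langle Df(x),\,y-x\rangle-\tfrac12|y-x|^2\qquad\text{for all }x,y\in\RR^n .
\]
I would fix $x$ and set $y=x+Df(x)$, i.e.\ move a Euclidean distance $|Df(x)|$ in the gradient direction; the inequality then collapses to $f(y)-f(x)\ge\frac12|Df(x)|^2$. This is the crucial point: although the displacement $y-x$ may be large, periodicity forces $f(y)-f(x)\le\operatorname{osc}(f):=\max f-\min f$ (the oscillation over one period), so that $|Df(x)|^2\le 2\operatorname{osc}(f)$ at every point, giving $\sup|Df|\le\sqrt{2\operatorname{osc}(f)}$.

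To close the estimate I would bound the oscillation in the reverse direction: connecting the maximum and minimum points of $f$ by a geodesic of the torus $\RR^n/2\ZZ^n$, whose diameter is $\sqrt{n}$, gives $\operatorname{osc}(f)\le\sqrt{n}\,\sup|Df|$. Feeding this into the previous inequality yields $\sup|Df|\le\sqrt{2\sqrt{n}\,\sup|Df|}$, hence $\sup|Df|\le 2\sqrt{n}$ and $\operatorname{osc}(f)\le 2n$, uniformly in $t$. Combining with the reduction in the first paragraph gives $|Du(t)|\le 4\sqrt{n}$ on $[-2,2]^n$, so in fact one may take $M$ depending on $n$ alone. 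The weaker dependence on $\int_{[-1,1]^n}u^2(0)\,d\mu$ recorded in the statement is then more than sufficient; it is what one naturally obtains if one prefers to estimate $\operatorname{osc}(f)$ through the $L^2$ data rather than the sharp geodesic bound, and it also comfortably absorbs the affine normalization $u(0)=0,\ Du(0)=0$ of Definition \ref{standard} (the affine shift needed to enforce it is itself controlled by $\sup|Df|$).

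The main obstacle I anticipate is not the inequalities above, which are short, but the careful reconciliation of a \emph{global} statement on $\RR^n$ (the tangent-paraboloid inequality, valid because $f+\frac12|x|^2$ is genuinely convex on all of $\RR^n$) with the compact torus: one must justify that the possibly-large gradient-direction displacement $y=x+Df(x)$ does no harm precisely because $f$ is periodic and therefore bounded by its oscillation, and one must verify that convexity — and hence the inequality — holds at \emph{every} time, which is exactly where $\varphi(t)\in\mathcal{H}_\TT$ is used, rather than any structure of the Calabi (Abreu) evolution equation itself.
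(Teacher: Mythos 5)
Your argument is correct, and it takes a genuinely different --- and more elementary --- route than the paper. The paper in fact gives no proof at all: the proposition is imported from \cite{FH}, and the stated dependence of $M$ on $\int_{[-1,1]^n}u^2(0)\,d\mu$ reflects how it is obtained there, namely by first controlling an $L^2$-type quantity of $u(t)$ \emph{along the flow} (via the distance-decreasing property of the Calabi flow, the same Calabi--Chen fact this paper invokes later in the proof of Theorem \ref{smooth}) and then converting that integral bound into an interior gradient bound by convexity. Your proof never touches the evolution equation: it uses only that each $u(t)$ is smooth and strictly convex with $f(t)=u(t)-\frac12\sum_i x_i^2$ periodic (the lemma of Section 2), and its three steps all check out --- the tangent-paraboloid inequality evaluated at $y=x+Df(x)$ gives $\frac12|Df(x)|^2\le \operatorname{osc}(f)$; the mean-value estimate between the max and min points, taken in lattice representatives at distance at most $\sqrt n$, gives $\operatorname{osc}(f)\le\sqrt n\,\sup|Df|$; and the resulting bootstrap $A\le\sqrt{2\sqrt n\,A}\Rightarrow A\le 2\sqrt n$ is legitimate precisely because $A=\sup|Df(t)|$ is finite a priori for a smooth periodic function. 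This yields $|Du(t)|\le 4\sqrt n$ on $[-2,2]^n$, which is \emph{stronger} than the proposition: your constant depends on $n$ alone, uniformly in $t$ and in the initial data, so it trivially implies the stated dependence, and it would even allow some downstream constants in the paper (the $M$ entering Definition \ref{standard} and hence the dependence of $\lambda$ in Theorem \ref{main}) to be made purely dimensional. What the flow-based argument of \cite{FH} buys instead is robustness: it does not rely on the exact global splitting of $u$ into a fixed quadratic plus a periodic function, and so it is the argument that survives in settings where no such splitting exists (general toric manifolds, weak initial data); but in the periodic setting of this proposition your static estimate fully suffices.
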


\begin{proof}[Proof of Theorem (\ref{main})]

Let $t_0 > 0$ be the first time that there exists $x_0 \in [-1, 1]^n$ such that $|Rm(t_0, x)| \ge \max(\lambda, \frac{\lambda}{\sqrt{2 t_0}})$. The first case we want to consider is $t_0 \ge 1$. Let us rescale the flow by $\lambda$, i.e., we define a new flow
$$
\tilde{u}(t, x) = \lambda u \left( \frac{t - t_0}{\lambda^2}, \frac{x - x_0}{\lambda} \right).
$$
Then following the analysis in Section 4 of \cite{FH}, one obtain that there exists $C_0 > 0$ depending only on $n, M, C_E$ such that for any $x$ outside $B_E(O, 1)$, 
$$
D_r \tilde{u} (0, x) > C_0.
$$
Let $v(x) = \tilde{u}(0, x)$. Thus $v(x)$ is a special convex function of type $(M, C_0, C_E)$ defined on a domain $[-\lambda, \lambda]^n$. Theorem (\ref{max_domain}) gives a contradiction if we choose $\lambda = 2 + \frac{R_0}{C_0}$.

The second case is $t_0 < 1$. Similar to the above case, let us rescale the flow by $\frac{\lambda}{\sqrt{2 t_0}}$. Without loss of generality, let us assume $\lambda > 1$, so we obtain a new flow defined in $[-2, 0]$. If for any $t \in [-1, 0]$, $\max |Rm|(t) < 2$, then we can apply the argument in the first case to obtain a contradiction. If there exists some $t \in [-1, 0]$ such that $\max |Rm|(t) \ge 2$, then the usual point-picking techniques allow us to obtain a flow in $[-1, 0].$ Applying the arguments in the first case, we can also obtain a contradiction.

\end{proof}

\section{Smoothing property}
Let $\mathcal{E}$ be the completion of $\mathcal{S}$ in the sense of the Mabuchi distance. In fact $\mathcal{E}$ is the $L^2$-completion of $\mathcal{S}$. For any $f \in \mathcal{E}$, it is a periodic function on $\RR^n$ with period $P = [-1, 1]^n$. Moreover, $u(x) = f(x) + \frac{1}{2} \sum_{i=1}^n x_i^2$ is a convex function. Thus $(D^2 u)$ exists almost everywhere on $\RR^n$ which implies that $(D^2 f)$ exists almost everywhere on $\RR^n$. Following the ideas of \cite{TW, ZZ}, we consider the mollification $f_h$ of $f$, i.e.,
$$
f_h (x) = h^{-n} \int_{\RR^n} f(y) \eta \left( \frac{x-y}{h}\right) ~ dy
$$
for some nonnegative function $\eta$ supported on the unit ball $B_1(0)$ and satisfying $\int_{\RR^n} \eta = 1$. It is easy to see that $f_h(x) + \frac{1}{2} \sum_{i=1}^n x_i^2$ is a smooth, strictly convex function on $\RR^n$.  For any point $x \in \RR^n$ that $(D^2 f)(x)$ exists, we have $(D^2 f_h)(x) \to (D^2 f)(x)$ as $h \to 0$ \cite{Zi}. Let us consider a sequence of smooth, strictly convex function on $\RR^n$:
$$
u_m(x) = \frac{m}{m+1} f_{\frac{1}{r(m)}} (x) + \frac{1}{2} \sum_{i=1}^n x_i^2,
$$
where $r$ is some function from $\NN$ to $\NN$ such that $r(i) \ge i$ for all $i \in \NN$.

The following lemma is crucial for us:

\begin{lemma} 
\label{approximate}
For any $r : \NN \to \NN$ with $r(i) \ge i$ for all $i \in \NN$, we have
$$
\lim_{m \to \infty} \| u_m(x) - u(x) \|_{L^2(P)} \to 0.
$$
Moreover, if the extended Mabuchi energy at $f$ is finite, i.e., $$\int_P \log \det (D^2 u) ~ dx > - \infty,$$ then 
there exists a $r : \NN \to \NN$ with $r(i) \ge i$ for all $i$ such that
$$
\lim_{m \to \infty} \int_P \log \det (D^2 u_m) ~ dx = \int_P \log \det (D^2 u) ~ dx.
$$
\end{lemma}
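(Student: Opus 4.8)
The plan is to treat the two assertions separately: the $L^2$ statement holds for every admissible $r$, whereas the convergence of the energy will force a careful choice of $r$.

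For the $L^2$ convergence I would write $u_m - u = \frac{m}{m+1}(f_{1/r(m)} - f) - \frac{1}{m+1} f$ and estimate by the triangle inequality. Since $f \in \mathcal{E}$ is an $L^2$-limit of smooth potentials, $f \in L^2(P)$, and the standard mollifier theory gives $\|f_{1/r(m)} - f\|_{L^2(P)} \to 0$ because $r(m) \ge m \to \infty$; the remaining term is bounded by $\frac{1}{m+1}\|f\|_{L^2(P)} \to 0$. No special choice of $r$ is needed here.

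For the energy, I first record that $D^2 u_m = \frac{m}{m+1}(D^2 f_{1/r(m)} + I) + \frac{1}{m+1} I$ is a convex combination of the positive matrix $D^2 v_h := D^2 f_h + I$ (with $h = 1/r(m)$) and the identity. I would prove the lower bound $\liminf_m \int_P \log\det(D^2 u_m) \ge \int_P \log\det(D^2 u)$ for \emph{every} $r \to \infty$ as follows. Since mollification commutes with differentiation, $D^2 v_h = (D^2 u)\ast\eta_h$ as matrix-valued measures, and because the singular part of $D^2 u$ is positive semidefinite, $D^2 v_h(x) \succeq ((D^2 u)_{ac}\ast\eta_h)(x)$. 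Concavity and monotonicity of $\log\det$ on positive matrices, together with Jensen's inequality applied to the probability measure $\eta_h(x-\cdot)\,dy$, give the pointwise bound $\log\det(D^2 v_h) \ge (\log\det(D^2 u)_{ac})\ast\eta_h$. Using $\log\det I = 0$ and concavity once more for the convex combination yields $\log\det(D^2 u_m) \ge \frac{m}{m+1}\log\det(D^2 v_{1/r(m)})$. Integrating over $P$ and using that $\int_P G\ast\eta_h = \int_P G$ by periodicity (here $G := \log\det(D^2 u)_{ac} \in L^1(P)$, which is exactly where finiteness of the Mabuchi energy enters) gives $\int_P \log\det(D^2 u_m) \ge \frac{m}{m+1}\int_P G \to \int_P G$.

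The matching upper bound $\limsup_m \int_P \log\det(D^2 u_m) \le \int_P G$ is the main obstacle. The elementary inequality $\log\det A \le \operatorname{tr} A - n$, together with the periodicity identity $\int_P \operatorname{tr}(D^2 u_m) = n|P|$ and Alexandrov's theorem (giving $D^2 u_m \to D^2 u$ a.e.), lets me run a Fatou argument, but it only produces $\limsup_m \int_P \log\det(D^2 u_m) \le \int_P G + \sigma$, where $\sigma \ge 0$ is the mass of the singular part of $\Delta u$ over $P$. When $u$ has a genuine singular Hessian (e.g. a convex crease), $\sigma > 0$ and this is not enough. The real content is to show that the mollified singular part contributes negligibly to $\int_P \log\det(D^2 v_h)$: on a tubular neighborhood $A_h$ of the singular set $\Sigma$ one has $|A_h| \to 0$ while $\det(D^2 v_h) \lesssim h^{-1}$, so that $\int_{A_h} \log\det(D^2 v_h) \lesssim |A_h|\log(1/h) \to 0$, and on the complement $\log\det(D^2 v_h) \to G$ with $\int_P G\ast\eta_h = \int_P G$. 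Because this $h\log(1/h)$-type decay is not uniform for a general singular measure, I expect to obtain it only along a subsequence $h_m = 1/r(m) \to 0$; extracting such a subsequence (and arranging $r(m) \ge m$) is precisely the diagonal choice of $r$ asserted in the statement. Combined with the robust lower bound above, this gives $\int_P \log\det(D^2 u_m) \to \int_P G$. Making the control of the singular contribution rigorous for an arbitrary finite-energy $u$, in the spirit of the mollification estimates of \cite{TW, ZZ}, is the step I expect to be hardest.
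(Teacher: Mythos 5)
Your $L^2$ argument is the same as the paper's, and your lower bound
$\liminf_m \int_P \log\det(D^2u_m)\,dx \ge \int_P \log\det(D^2u)\,dx$ via Jensen's inequality, positivity of the singular part, and periodicity is correct (and more self-contained than what the paper writes). But the proof is incomplete exactly at what you yourself identify as the crux: the upper bound. The sketch you offer for it does not close. For a general finite-energy $u\in\mathcal{E}$ the singular part of $D^2u$ is only a positive-semidefinite matrix-valued measure carried by some Lebesgue-null Borel set $\Sigma$; this set need not be closed or rectifiable (it can be dense -- e.g.\ creases along a dense family of hyperplanes), so the ``tubular neighborhood $A_h$ of $\Sigma$ with $|A_h|\to 0$'' does not exist in general. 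Moreover the pointwise estimate is $\|(D^2f)\ast\eta_h\|\lesssim h^{-1}$ \emph{per entry}, hence $\det(D^2v_h)\lesssim h^{-n}$ rather than $h^{-1}$; the $|A_h|\log(1/h)$ mechanism might survive this, but it shows the estimates are not under control, and you concede the argument is only expected to work along an unidentified subsequence. So there is a genuine gap, not just a missing routine verification.

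The idea you are missing is that the paper never mollifies $u$ directly, and thereby never has to estimate the singular contribution at all: it \emph{reorders the two regularizations}. First set $v_m = u + \frac{1}{2m}\sum_i x_i^2$. Then $D^2v_m \succeq D^2u$ pointwise a.e., so $\log\det(D^2v_m)\ge \log\det(D^2u)$, while boundedness of $v_m$ on $[-2,2]^n$ gives a uniform upper bound $\int_P\log\det(D^2v_m)\,dx\le C$; dominated (in fact monotone) convergence then yields
$$
\lim_{m\to\infty}\int_P \log\det(D^2 v_m)\,dx = \int_P \log\det(D^2u)\,dx,
$$
with the singular part playing no role since only a.e.\ Hessians appear and the comparison is pointwise. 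Only \emph{then} does one mollify, setting $v_{j,m}=f_{1/j}+\frac{m+1}{2m}\sum_i x_i^2$. The crucial gain from having added the quadratic first is the uniform-in-$j$ lower bound: since $D^2f_{1/j}=(D^2f)\ast\eta_{1/j}\succeq -I$, one has $D^2v_{j,m}\succeq \frac{1}{m}I$, i.e.\ $\log\det(D^2v_{j,m})\ge -n\log m$, which is exactly what allows the argument of Lemma 2.2 of \cite{ZZ} to pass to the limit $j\to\infty$ at fixed $m$, with no analysis of $\Sigma$ whatsoever. Finally one chooses $j=r(m)\ge m$ so that the stage-$m$ error is less than $1/m$ -- this diagonalization, not any decay rate of a singular contribution, is why the lemma only asserts existence of \emph{some} $r$ -- and $u_m=\frac{m}{m+1}v_{r(m),m}$ costs only the harmless additive term $n\log\frac{m}{m+1}\to 0$. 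In short, the fix is structural (add the quadratic before mollifying), not a sharper estimate near the singular set; if you want to salvage your one-shot mollification route, you would need to prove the reverse Fatou inequality by producing an $L^1$ majorant for $\log\det((D^2u)\ast\eta_h)$, e.g.\ via $n\log^+$ of the maximal function of the measure $\Delta u$, which is a genuinely different argument from the one you sketch.
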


\begin{proof}
The first statement is easy to prove:

\begin{align*}
& \lim_{m \to \infty}  \| u_m(x) - u(x) \|_{L^2(P)} \\
=&  \lim_{m \to \infty}  \|\frac{m}{m+1} f_{\frac{1}{r(m)}} (x) - f(x) \|_{L^2(P)}\\
\le & \lim_{m \to \infty}  \frac{m}{m+1} \| f_{\frac{1}{r(m)}} (x) - f(x) \|_{L^2(P)} +  \lim_{m \to \infty}  \|\frac{m}{m+1} f(x) - f(x) \|_{L^2(P)}\\
=& 0.
\end{align*}
For the second statement, let $v_m(x) = f(x) + \frac{m+1}{2m} \sum_{i=1}^n x_i^2$. Since $v_m(x)$ is uniformly bounded on $[-2, 2]^n$, there exists a constant $C > 0$ independent of $m$ such that
$$
\int_P \log \det (D^2 v_m(x)) \le C, \quad \int_P \log \det (D^2 u(x)) \le C.
$$
Also $\det (D^2 v_m(x)) \ge \det (D^2 u(x))$ implies that
$$
\int_P \log \det (D^2 v_m(x)) \ge \int_P \log \det (D^2 u(x)).
$$
Then dominated convergence theorem shows that
$$
\lim_{m \to \infty} \int_P \log \det (D^2 v_m) ~ dx = \int_P \log \det (D^2 u) ~ dx.
$$

Now for every $j \in \NN$, we define 
$$
v_{j, m} (x) := f_{\frac{1}{j}}(x) + \frac{m+1}{2m} \sum_{i=1}^n x_i^2.
$$
Again, $v_{j, m} (x)$ is a smooth, strictly convex function on $\RR^n$. In fact, 
$$
\log \det (D^2 v_{j, m} (x)) \ge - n \log m, \quad \log \det (D^2 v_m (x)) \ge - n \log m.
$$
Following the proof of Lemma 2.2 in \cite{ZZ}, one concludes that
$$
\lim_{j \to \infty} \log \det (D^2 v_{j, m} (x)) = \log \det (D^2 v_m(x)).
$$
Thus for each $m$, one can choose $j = r(m) \ge m $ such that $$| \log \det (D^2 v_{j, m} (x)) - \log \det (D^2 v_m (x)) | < \frac{1}{m}.$$

It is easy to see that with our choice of $r : \NN \to \NN$, we have

$$
\lim_{m \to \infty} \int_P \log \det (D^2 u_m) ~ dx = \int_P \log \det (D^2 u) ~ dx,
$$
where $u_m = \frac{m}{m+1} v_{r(m), m}$.
\end{proof}

Now we are ready to prove Theorem (\ref{smooth}): 

\begin{proof} [Proof of Theorem (\ref{smooth})]
Let $u_0 \in \mathcal{E}$ with finite Mabuchi energy. Then by Lemma (\ref{approximate}), there exists a sequence of $u_m \in \mathcal{S}$ such that
\begin{align*}
& \lim_{m \to \infty} \| u_m(x) - u(x) \|_{L^2(P)} =0, \\
& \lim_{m \to \infty} \int_P \log \det (D^2 u_m) ~ dx = \int_P \log \det(D^2 u) ~ dx. 
\end{align*}

For each $m \in \NN$, let $u_m(t)$ be the Calabi flow starting from $u_m$. Let $v(x) = \frac{1}{2} \sum_{i=1}^2 x_i^2$. It is easy to see that there exists $C_1, C_2 > 0$ such that for any $m \in \NN,~ t  > 0$,
\begin{align*}
\| u_m(t, x) - v(x) \|_{L^2(P)} \le C_1, \quad Ca(u_m(t)) \le \frac{C_2}{t}.
\end{align*}

Then for any fixed $t > 0$, Theorem (\ref{main}) provides uniform control of the curvature of $u_m(t, x)$. Thus for any fixed $t > 0, k \in \NN$, one has uniform control of $C^k$ norm of $u_m(t, x)$. Let $u(t)$ be the weak Calabi flow starting from $u_0$. Then by the fact that the (weak) Calabi flow decreases the Mabuchi distance \cite{CC, St2, Ma}, one has
$$
d(u_m(t), u(t)) \le d(u_m, u_0).
$$

Hence 

$$
\lim_{m \to \infty} u_m(t) = u(t).
$$

We conclude that $u(t)$ is a smooth function.
\end{proof}

\begin{rmk}
It is not hard to construct a K\"ahler metric in $\mathcal{E} \backslash \mathcal{S}$. In fact, let 
$$
f(x, y) = \frac{1}{4} (x^4 + y^4) - \frac{1}{2} (x^2 +y^2). 
$$
Then $f(x, y)$ is a periodic function on $\RR^2$ (up to the second order) with period $[-1, 1]^2$. It is easy to see that $u(x, y) = f(x, y) + \frac{1}{2} (x^2 +y^2)$ in $\mathcal{E} \backslash \mathcal{S}$ and its Mabuchi energy is:

\begin{align*}
& - \int_P \log \det (D^2 u) ~ dx dy \\
=& -\int_{-1}^1 \int_{-1}^1 2 \ln 3 + 2 \ln |x| + 2 \ln |y| ~ dx dy\\
=& - 8 \ln 3 - 16 \int_0^1 \ln x ~ dx\\
=& - 8 \ln 3 + 16.
\end{align*}
\end{rmk}


\begin{thebibliography}{1}
\bibitem{A1} M. Abreu, {\em K\"ahler geometry of toric varieties and extremal metrics}, International J. Math. 9 (1998) 641-65.

\bibitem{BDL} R. Berman, T. Darvas and C. Lu, {\em Convexity of the extended K-energy and the large time behaviour of the weak Calabi flow}, arXiv:1510.01260.

\bibitem{BDL2} R. Berman, T. Darvas and C. Lu, {\em Regularity of weak minimizers of the K-energy and applications to properness and K-stability}, arXiv:1602.03114.

\bibitem{Chen} X.X Chen, {\em Space of K\"ahler metrics (IV) - On the lower bound of the K-energy}, arXiv:0809.4081.

\bibitem{CC} E. Calabi and X.X. Chen, {\em Space of K\"ahler metrics and Calabi flow}, J. Differential Geom. {\bf 61} no. 2 (2002), 173-193.

\bibitem{ChenHe} X.X. Chen, W. Y. He, {\em On the Calabi flow}, Amer. J. Math. 130 (2008), no. 2, 539-570.

\bibitem{ChenHe2} X.X. Chen, W. Y. He, {\em The Calabi flow on K\"ahler surface with bounded Sobolev constant--(I)}, arXiv:0710.5159

\bibitem{ChenHe3} X.X. Chen, W. Y. He, {\em The Calabi flow on toric Fano surface}, arXiv:0807.3984.

\bibitem{CTZ} X.X. Chen, G. Tian and Z. Zhang, {\em On the weak K\"ahler-Ricci flow}, Trans. Amer. Math. Soc. 363 (2011), no. 6, 2849-2863.

\bibitem{DR} T. Darvas and Y.A. Rubinstein, {\em Tian's properness conjecture and Finsler geometry of the space of K\"ahler metrics}, arXiv:1506.07129.

\bibitem{D1} S.K. Donaldson, {\em Scalar curvature and stability of toric varieties}, Jour. Differential Geometry 62 (2002), 289-349.

\bibitem{D2} S.K. Donaldson, {\em Interior estimates for solutions of Abreu's equation}, Collectanea Math. 56 (2005), 103-142.

\bibitem{D3} S.K. Donaldson, {\em Extremal metrics on toric surfaces: a continuity method}, J. Differential Geom. 79 (2008), no. 3, 389-432.

\bibitem{D4} S.K. Donaldson, {\em Constant scalar curvature metrics on toric surfaces}, Geom. Funct. Anal. 19 (2009), no. 1, 83-136. 

\bibitem{FS} R. Feng, G. Sz\'ekelyhidi, {\em Periodic solutions of Abreu's equation},  Math. Res. Lett. 18 (2011), no. 6, 1271-1279.

\bibitem{FH} R.J. Feng and H.N. Huang, {\em The Global Existence and Convergence of the Calabi Flow on $\mathbb{C}^n = \mathbb{Z}^n + i \mathbb{Z}^n$}, J. Funct. Anal. {\bf 263} no.4 (2012), 1129-1146.

\bibitem{H1} H. Huang, {\em On the Extension of the Calabi Flow on Toric Varieties},  Ann. Global Anal. Geom. 40 (2011), no. 1, 1-19, arxiv:1101.0638.

\bibitem{Ma} U. Mayer, {\em Gradient flows on nonpositively curved metric spaces and harmonic maps}, Comm. Anal. Geom., Vol. 6, No. 2, 199-253, 1998.

\bibitem{P1} G. Perelman, {\em The entropy formula for the Ricci flow and its geometric applications}, arXiv:math/0211159.

\bibitem{P2} G. Perelman, {\em Ricci flow with surgery on three-manifolds},  arXiv:math/0303109.

\bibitem{P3} G. Perelman, {\em Finite extinction time for the solutions to the Ricci flow on certain three-manifolds},  arXiv:math/0307245.

\bibitem{Shi} W.X. Shi, {\em Ricci deformation of the metric on complete noncompact Riemannian manifolds} , J. Differential Geom, 30(2) (1989), 303-394.

\bibitem{St} J. Streets, {\em The long time behavior of fourth-order curvature flows},  Calc. Var. Partial Differential Equations 46 (2013), no. 1-2, 39-54.

\bibitem{St2} J. Streets, {\em Long time existence of minimizing movement solutions of Calabi flow},  Adv. Math. 259 (2014), 688-729.

\bibitem{St3} J. Streets, {\em The consistency and convergence of K-energy minimizing movements}, Trans. Amer. Math. Soc. 368 (2016), no. 7, 5075-5091.

\bibitem{Ti1} G. Tian, {\em Canonical Metrics in K\"ahler Geometry}, Birkh\"auser, 2000.

\bibitem{Ti2} G. Tian, {\em Existence of Einstein metrics on Fano manifolds}, Metric and differential geometry, 119-159,  Progr. Math., 297, Birkh\"auser/Springer, Basel, 2012. 

\bibitem{TW} N.S. Trudinger and X.J. Wang, {\em The affine plateau problem}, J. Amer. Math. Soc. Vol 18(2005), 253-289.

\bibitem{Zi} W.P. Ziemer,  {\em Weakly differentiable functions}, Sobolev spaces and functions of bounded variation, Graduate Texts in Mathematics, 120. Springer-Verlag, New York, 1989.

\bibitem{ZZ} B. Zhou and X.H. Zhu, {\em Minimizing weak solutions for Calabi's extremal metrics on toric manifolds}, Calc. Var. Partial Differential Equations 32 (2008), no. 2, 191-217. 
\end{thebibliography}
\end{document}